\documentclass[11pt,a4paper,leqno]{amsart}
\usepackage{amsfonts}
\usepackage{mathrsfs}

\oddsidemargin =10mm \evensidemargin =10mm
\textwidth =150mm \textheight =200mm

\usepackage{amsthm}
\usepackage{xspace}
\usepackage{graphics}
\usepackage{amsfonts,amssymb}
\usepackage{amsthm}
\usepackage[all]{xy}
\usepackage{stmaryrd}
\usepackage{color}
\usepackage[francais,english]{babel}

\newcommand{\be}{\begin{otherlanguage}{english}}
\newcommand{\ee}{\end{otherlanguage}}

\theoremstyle{definition}

\newtheorem{rem}{Remark}
\theoremstyle{plain}
\newtheorem{lem}{Lemma}

\newtheorem{thm}[lem]{Theorem}

\newtheorem*{thm*}{Theorem}

\theoremstyle{remark}

\numberwithin{equation}{subsection}

\newcommand{\beq}{\begin{equation}}
\newcommand{\eeq}{\end{equation}}








\begin{document}
\title{A refinement of Franks' theorem}

\author{Hui Liu}
\address{Key Laboratory of Wu Wen-Tsun Mathematics, Chinese
Academy of Sciences, School of Mathematical Sciences, University of
Science and Technology of China, Hefei, Anhui 230026, P.R.China}
\email{huiliu@ustc.edu.cn}
\thanks{The first author is supported by NSFC (Nos. 11401555, 11371339), Anhui Provincial Natural Science Foundation
 (No. 1608085QA01); The second author is supported by China Postdoctoral Science Foundation (No.
 2013T60251), International Postdoctoral Exchange Fellowship Program (No. 20130045), NFSC (No. 11401320).}

\author{Jian Wang}
\address{Chern Institute of Mathematics and Key Laboratory of Pure Mathematics and Combinatorics
 of Ministry of Education, Nankai University, Tianjin 300071,
P.R.China} \email{wangjian@nankai.edu.cn} \curraddr{Max Planck
Institute for Mathematics in the Sciences, Inselstra{\ss}e 22,
D-04103 Leipzig, Germany} \email{jianwang@mis.mpg.de}

\subjclass[2000]{37E45, 37E30}
\date{Jan. 15, 2016}
\maketitle

\begin{abstract}In this paper, we give a refinement of Franks' theorem \cite{F2}, which
answers two questions raised by Kang \cite{Kang}.
\end{abstract}
\renewcommand\abstractname{R\'esum\'e}
\begin{abstract}Dans cet article, nous donnons
un raffinement du th\'eor\`eme de Franks \cite{F2}, il r\'epond aux
questions propos\'es par Kang  \cite{Kang}.
\end{abstract}
\section{introduction}
Let $\mathbb{A}=\mathbb{R}/\mathbb{Z}\times (0,1)$ (resp.
$\bar{\mathbb{A}}=\mathbb{R}/\mathbb{Z}\times [0,1]$) be the open
annulus (resp. the closed annulus). In 1992, Franks \cite{F2} (and
\cite{F3}) proved the following celebrated theorem:
\begin{thm}[Franks]\label{thm:Franks 3.5} Suppose $F$ be an area preserving homeomorphism of the
open or closed annulus which is isotopic to the identity. If $F$ has
at least one fixed or periodic point then $F$ must have infinitely
many interior periodic points.
\end{thm}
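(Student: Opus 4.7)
The plan is to lift $F$ to the universal cover of the annulus, apply an equivariant Poincar\'e--Birkhoff theorem to produce one interior periodic orbit for each rational rotation number in the rotation interval of the lift, and then show that this rotation interval cannot degenerate to a single point. First I would use the isotopy to the identity to fix a lift $\tilde F\colon\mathbb{R}\times I\to\mathbb{R}\times I$, with $I=(0,1)$ or $I=[0,1]$. After replacing $F$ by a suitable iterate we may assume the given periodic point is actually a fixed point $p_0$, and after a horizontal integer translation of $\tilde F$ we arrange that some preimage $\tilde p_0$ of $p_0$ is fixed by $\tilde F$. The rotation set $\rho(\tilde F)$ is then a compact interval $[\alpha,\beta]\ni 0$, by the standard rotation theory for area-preserving annulus homeomorphisms isotopic to the identity.

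The engine of the proof is Franks' generalized Poincar\'e--Birkhoff theorem: for every rational $p/q$ in lowest terms with $\alpha<p/q<\beta$ there exists an interior periodic point $x\in\mathbb{A}$ of minimal period $q$ together with a lift $\tilde x\in\pi^{-1}(x)$ satisfying $\tilde F^q(\tilde x)=\tilde x+(p,0)$. Distinct reduced fractions $p/q$ yield genuinely distinct orbits, so once we know $\alpha<\beta$ we immediately obtain infinitely many interior periodic points and the theorem follows. These points lie strictly inside $\mathbb{A}$ since the boundary rotation numbers on $\partial\bar{\mathbb{A}}$ realise the endpoints $\alpha$ and $\beta$ of the rotation interval, while the Poincar\'e--Birkhoff construction uses rationals in the open interior $(\alpha,\beta)$.

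The main obstacle will therefore be to exclude the degenerate case $\rho(\tilde F)=\{0\}$. I would argue by contradiction: suppose every $\tilde F$-orbit has zero asymptotic horizontal drift. Combining Poincar\'e recurrence (available because $F$ is area-preserving) with an equivariant version of Brouwer's plane translation theorem --- applied either to $\tilde F$ on the complement of its fixed-point set, or to the modified lifts $\tilde F+(n,0)$ for $n\in\mathbb{Z}\setminus\{0\}$ --- one reaches a contradiction between the existence of a bounded recurrent lift of a non-fixed recurrent point of $F$ and the vanishing of all rotation numbers. Extracting a second rotation number from a single given fixed orbit under mere area preservation is the technical heart of the argument, and is precisely where Franks' original insight in \cite{F2,F3} lies; in the present refinement it is this step that one would have to revisit to extract \emph{quantitative} information about the new periodic points beyond their mere existence.
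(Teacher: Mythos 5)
First, note that the paper does not prove this statement at all: Theorem \ref{thm:Franks 3.5} is quoted from Franks \cite{F2,F3} and is used as an ingredient in the proof of Theorem \ref{thm:k}. So your proposal can only be measured against Franks' argument and against the machinery the paper deploys for the analogous degenerate case of its refinement. Your outline of the easy half is fine: normalize so that a lift $\tilde F$ fixes a lift of the (iterated) periodic point, and if some recurrent point has nonzero rotation number, the generalized Poincar\'e--Birkhoff theorem (Theorem \ref{thm:FP} here) yields a periodic orbit for every reduced rational strictly between the two rotation numbers, hence infinitely many. (Two caveats even here: on the open annulus the rotation set need not be a compact interval --- boundedness of $\mathcal{E}(z)$ requires $\sharp\mathrm{Fix}(F)<\infty$, cf.\ \cite{P2} --- and your claim that interiority follows because the boundary circles ``realise the endpoints $\alpha,\beta$'' is unjustified; the paper gets interior points in the troublesome case from Theorem \ref{thm:Franks 3.4}, not from boundary rotation numbers.)

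The genuine gap is the degenerate case, which you acknowledge but do not prove, and the contradiction you sketch is not available in the form you state it. ``All rotation numbers vanish'' is not by itself contradictory for an area-preserving map with a fixed point --- the identity map is the obvious example --- so no argument can refute $\rho(\tilde F)\equiv 0$ per se; the hard case of Franks' theorem is precisely to show that a map with \emph{finitely many} periodic points and a fixed point must nevertheless carry recurrent points of nonzero rotation number around a suitably chosen pair of punctures. Brouwer's translation theorem cannot be applied to $\tilde F$ itself (it has fixed points), and applying it to $\tilde F+(n,0)$ only tells you those lifts are fixed-point free, which is no contradiction. What actually closes this case is much more delicate: Franks' original chain-recurrence/free-disk arguments, or, in the modern treatment this paper follows for its own degenerate case $k'=0$, Le Calvez's equivariant foliation theorem (Theorem \ref{thm:foliation}): one takes a maximal unlinked set $X$ of fixed points on $\mathbf{S}^2$, produces a positively transverse foliation on $\mathbf{S}^2\setminus X$, chooses a leaf $\lambda$ joining two singularities $z_1,z_2$, and shows via a first-return-time estimate that recurrent points near $\lambda$ have strictly positive rotation number in the annulus $\mathbf{S}^2\setminus\{z_1,z_2\}$, while the remaining points of $X$ have rotation number zero; only then does Theorem \ref{thm:FP} deliver infinitely many periodic points. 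Without an argument of this kind your plan proves nothing in the case $\rho(\tilde F)=\{0\}$, which is exactly the heart of the theorem.
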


Kang \cite[Section 1.2]{Kang} raised the following questions when he
studied the reversible maps on planar domains:

Suppose that $F$ is an area preserving homeomorphism of the open or
closed annulus which is isotopic to the identity. If
$\mathrm{Per}_{\mathrm{odd}}(F)\neq \emptyset$, does it imply that
$\sharp\mathrm{Per}_{\mathrm{odd}}(F)=+\infty$ where
$\mathrm{Per}_{\mathrm{odd}}(F)$ is the set of odd periodic points
of $F$? Furthermore, let $k\in \mathbb{N}$ and $n\in \mathbb{N}$ be
two numbers such that $(n,k)=1$. If
$\mathrm{Per}_{k}(F)\neq\emptyset$, does it imply that
$\sharp\{\bigcup_{(k',n)=1}\mathrm{Per}_{k'}(F)\}=+\infty$ where
$\mathrm{Per}_{k}(F)$ is the set of $k$-prime-periodic points of
$F$? Here $z$ is a $k$-prime-periodic of $F$ means that $z$ is not a
$l$-periodic point of $F$ if $l<k$.\smallskip

In this paper, we answer his questions. We have the following
theorems

\begin{thm}\label{thm:odd}If $\mathrm{Per}_{\mathrm{odd}}(F)\neq \emptyset$, then
$\sharp\mathrm{Per}_{\mathrm{odd}}(F)=+\infty$.
\end{thm}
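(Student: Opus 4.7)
The plan is to reduce the theorem to a fixed-point situation by passing to the $n$-th iterate of $F$, and then to apply a period-sensitive form of Franks' theorem. Pick $z_0 \in \mathrm{Per}_{\mathrm{odd}}(F)$ of odd prime $F$-period $n$, and set $G := F^n$. Then $G$ is an area-preserving homeomorphism of the annulus, isotopic to the identity, with $z_0$ as a fixed point. A short orbit-counting computation shows that if $w$ is any periodic point of $F$ of prime $F$-period $k$, then its prime $G$-period equals $m = k/\gcd(k,n)$; since $n$ is odd, $\gcd(k,n)$ is odd, and hence $k$ is odd if and only if $m$ is odd. In particular, every fixed point of $G$, and more generally every $G$-periodic point of odd prime $G$-period, belongs to $\mathrm{Per}_{\mathrm{odd}}(F)$. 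The problem is thus reduced to producing infinitely many $G$-periodic points of odd prime period.

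I would next lift $G$ to a homeomorphism $\tilde G$ of the universal cover $\tilde{\mathbb{A}} = \mathbb{R} \times (0,1)$ which fixes a chosen lift $\tilde z_0$ of $z_0$: starting from any lift $\tilde F$, one has $\tilde F^n(\tilde z_0) = \tilde z_0 + (p,0)$ for some $p \in \mathbb{Z}$, and setting $\tilde G := T^{-p} \circ \tilde F^n$ with $T(x,y) = (x+1,y)$ does the job. Let $[\alpha,\beta]$ be the Franks rotation interval of $\tilde G$; by construction $0 \in [\alpha,\beta]$. The key step is to extract from the argument behind Theorem \ref{thm:Franks 3.5} the sharper assertion that every rational $p'/q'$ in lowest terms lying in the interior of $[\alpha,\beta]$ is realized by a $G$-periodic orbit of prime period exactly $q'$. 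Granting this, if $\alpha < \beta$ one selects infinitely many rationals $p'/q' \in (\alpha,\beta)$ in lowest terms with $q'$ an arbitrarily large odd prime; each of these yields a $G$-periodic orbit of odd prime period, and hence an element of $\mathrm{Per}_{\mathrm{odd}}(F)$ by the reduction above, producing the required infinite family.

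The main obstacle is the degenerate case $\alpha = \beta = 0$, in which every $\tilde G$-orbit has vanishing mean horizontal drift and the argument just described is vacuous. Here I would invoke the Brouwer-translation-arc machinery underlying the proof of Theorem \ref{thm:Franks 3.5}: combined with area preservation (Poincar\'e recurrence on the cylinder) and the presence of the fixed point $\tilde z_0$, it forces the existence of infinitely many \emph{fixed} points of $\tilde G$, hence infinitely many fixed points of $G$, all of which are odd-$F$-periodic by the reduction. A secondary obstacle, already present in the non-degenerate case, is verifying that Franks' construction produces periodic orbits whose prime period is exactly the denominator of the rational rotation number in lowest terms, and not merely a proper divisor of it; ruling out such collapse requires revisiting the Lefschetz-index and translation-arc parts of the argument of \cite{F2}.
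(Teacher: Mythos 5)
Your reduction to $G=F^{n}$ and the non-degenerate case are sound and essentially parallel the paper's argument: when the rotation interval has nonempty interior, Theorem \ref{thm:FP} supplies, for every irreducible $p'/q'$ in that interior with $q'$ odd, a $G$-periodic point of period $q'$, and since a periodic point whose rotation number is $p'/q'$ in lowest terms necessarily has least period a multiple of $q'$, your worry about the period collapsing to a proper divisor is unfounded --- this part needs no revisiting of Lefschetz indices. The genuine gap is your treatment of the degenerate case $\alpha=\beta=0$, which is precisely where the content of the theorem lies. Your claim that vanishing rotation interval, area preservation and the fixed point $\tilde z_0$ force infinitely many fixed points of $\tilde G$ is false. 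Take the time-one map of the Hamiltonian flow of a small Morse function on the annulus vanishing on the boundary, e.g.\ $H(x,y)=\epsilon\sin(\pi y)\cos(2\pi x)$ on $\mathbb{R}/\mathbb{Z}\times[0,1]$ with $\epsilon$ small: it is area preserving, isotopic to the identity, every invariant level circle is contractible so every recurrent point has rotation number $0$ for the natural lift, yet the fixed point set consists of finitely many points (the critical points of $H$ and the rest points on the boundary). Its infinitely many periodic points all have rotation number $0$ and unbounded periods, so no Brouwer-translation-arc or mean-rotation argument can yield infinitely many fixed points; what must be produced instead are periodic points of controlled (odd) period among these higher-period orbits.

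This is exactly what the paper does and what your proposal is missing. Following Le Calvez, one passes to $\mathbf{S}^2=\mathbb{A}\sqcup\{N,S\}$, takes a maximal unlinked set $X$ of fixed points (of cardinality at least $3$ in the degenerate case), and applies Theorem \ref{thm:foliation} to get a transverse foliation on $\mathbf{S}^2\setminus X$. A leaf $\lambda$ joining two singularities $z_1,z_2$ determines a \emph{new} annulus $A_{z_1,z_2}=\mathbf{S}^2\setminus\{z_1,z_2\}$, and a first-return/Birkhoff-sum estimate shows that recurrent points near $\lambda$ have strictly positive rotation number in $A_{z_1,z_2}$, while any third point of $X$ has rotation number $0$ there; only then does Theorem \ref{thm:FP}, applied in $A_{z_1,z_2}$ rather than in the original annulus, give periodic points realizing all intermediate rationals, in particular those with odd denominators $s$, whose $F$-periods divide $ns$ and are therefore odd. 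Without some such change of annulus (or an equivalent device), the degenerate case of your argument does not go through; note also that the paper proves the stronger Theorem \ref{thm:k} and obtains Theorem \ref{thm:odd} as the special case $n_0=2$.
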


\begin{thm}\label{thm:k}Assume that $k,n_0\in \mathbf{N}$ which satisfy that $(k,n_0)=1$.
 If $\mathrm{Per}_{k}(F)\neq\emptyset$,
then
$$\sharp\left\{\bigcup_{(k',n_0)=1}\mathrm{Per}_{k'}(F)\right\}=+\infty.$$
\end{thm}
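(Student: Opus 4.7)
The plan is to apply Theorem~\ref{thm:Franks 3.5} not to $F$ itself but to the iterate $G:=F^{k}$, and then to use a small piece of divisibility bookkeeping to convert $G$-periodic points into $F$-periodic points whose $F$-prime period is coprime to $n_0$.

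First, I would pick $z_0\in\mathrm{Per}_{k}(F)$; then $G=F^{k}$ is still an area-preserving homeomorphism of the same annulus, isotopic to the identity, and it fixes $z_0$. Theorem~\ref{thm:Franks 3.5} applied to $G$ immediately gives infinitely many interior $G$-periodic points. The elementary comparison of periods is the bridge back to $F$: if $w$ is $G$-periodic with $G$-prime period $m$ and $F$-prime period $l$, then $F^{l}(w)=w$ implies $G^{l}(w)=w$, hence $m\mid l$, while $G^{m}(w)=F^{km}(w)=w$ gives $l\mid km$. Writing $l=md$, the relation $md\mid km$ forces $d\mid k$, and together with $(k,n_{0})=1$ this yields
\[
(l,n_{0})\;=\;(md,n_{0})\;=\;(m,n_{0}).
\]
Thus the $F$-prime period of $w$ is coprime to $n_{0}$ if and only if its $G$-prime period is, and it suffices to exhibit infinitely many $G$-periodic points with $G$-prime period coprime to $n_{0}$.

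To produce such points I would invoke the finer rotation-number version of Franks' results in \cite{F2} and \cite{F3}. Because $G$ is area-preserving, isotopic to the identity, and has the fixed point $z_{0}$, one expects its rotation interval (with respect to an identity isotopy fixing $z_0$, equivalently a suitable lift of the annulus) to be nondegenerate; once this is established, the Poincar\'e--Birkhoff-type statement in \cite{F2} guarantees that every rational $p/q$ in its interior is realised as the rotation number of a $G$-periodic point of $G$-prime period exactly $q$. Since the rationals with denominator coprime to $n_{0}$ are dense in any non-empty open interval, this supplies infinitely many $G$-periodic points of $G$-prime period coprime to $n_{0}$, and then the bookkeeping above finishes the proof.

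The main technical obstacle is precisely the non-degeneracy of this rotation interval. When both boundary rotation numbers of $G$ happen to equal $0$, the fixed point $z_0$ alone does not automatically produce a second rotation number, and one must use a Brouwer-translation type argument in the spirit of \cite{F2},\cite{F3} to extract from the infinite $G$-periodic set produced by Theorem~\ref{thm:Franks 3.5} a periodic point of nonzero rotation number around $z_{0}$. Every other step is essentially book-keeping once this non-triviality is secured.
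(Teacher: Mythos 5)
Your reduction is sound and in fact matches the paper's: pass to $G=F^{k}$, note that for a $G$-periodic point the $F$-prime period $l$ and the $G$-prime period $m$ satisfy $m\mid l\mid km$, so $(l,n_{0})=(m,n_{0})$ because $(k,n_{0})=1$, and then try to realize infinitely many $G$-prime periods coprime to $n_{0}$ through a Poincar\'e--Birkhoff type statement (Theorem \ref{thm:FP} in the paper). But the step you yourself label ``the main technical obstacle'' is exactly the heart of the proof, and your proposed way around it does not work as stated. In the degenerate case it can happen that \emph{every} recurrent (in particular every periodic) point of $G$ has rotation number $0$ for the natural lift: think of all periodic orbits winding around an interior fixed point of $G$, hence contractible in the annulus. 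Then no selection from the infinitely many periodic points supplied by Theorem \ref{thm:Franks 3.5} yields a nonzero rotation number in the original annulus; the rotation set there is genuinely a single point, Theorem \ref{thm:FP} gives nothing, and no ``Brouwer-translation type argument'' in the original annulus can repair this, because the statement you need there is simply false.

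What the paper does in this degenerate case is to change the annulus. If all fixed points of $G$ lie on the boundary of $\bar{\mathbb{A}}$ (the case $\chi(M)=0$), then all rotation numbers vanish, hence the rotation vector of the invariant measure vanishes, and Theorem \ref{thm:Franks 3.4} produces an interior fixed point of $G$, a contradiction. Otherwise ($\chi(M)<0$) one views $G$ on $\mathbf{S}^{2}$, takes a maximal unlinked set $X\subset\mathrm{Fix}(G)\cup\{N,S\}$ with $\sharp X\geq 3$, applies Le Calvez's equivariant foliation theorem (Theorem \ref{thm:foliation}) to get a transverse foliation with singular set $X$, picks a leaf $\lambda$ joining two points $z_{1},z_{2}\in X$, and shows by a first-return and intersection-number estimate that $\mu$-almost every recurrent point passing through a small disk near $\lambda$ has strictly positive rotation number in the annulus $A_{z_{1},z_{2}}=\mathbf{S}^{2}\setminus\{z_{1},z_{2}\}$, while the remaining points of $X$ are fixed points of rotation number $0$ in $A_{z_{1},z_{2}}$. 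Theorem \ref{thm:FP}, applied in this new annulus, then realizes every rational $r/s$ with $(s,n_{0})=1$ in a nontrivial interval, and your divisibility bookkeeping finishes. So the missing ingredient in your proposal is precisely this change of annulus obtained by removing two suitably chosen fixed points, found via the transverse foliation; without it your argument has a genuine gap at its central step.
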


\begin{rem} Theorem \ref{thm:k} implies Theorem \ref{thm:odd}. Indeed, if
$\mathrm{Per}_{\mathrm{odd}}(F)\neq \emptyset$, there is
$k\in2\mathbf{Z}+1$ such that $\mathrm{Per}_{k}(F)\neq\emptyset$.
Taken $n_0=2$, then Theorem \ref{thm:odd} follows from Theorem
\ref{thm:k}.
\end{rem}

Hence, we only need to prove Theorem \ref{thm:k}. We will introduce
some mathematical objects and recall some well-known facts in
Section 2. We will prove Theorem \ref{thm:k} in Section 3.\bigskip

\noindent\textbf{Acknowledgements.} We would like to thank Patrice
Le Calvez and Yiming Long for their helpful conversations and
comments.


\section{Preliminaries}

\subsection{Rotation vector}\label{subsec:rotation vector}
Let us introduce the classical notion of rotation vector which was
defined originally in \cite{S}. Let $M$ be a smooth manifold.
Suppose that $F$ is the time-one map of an identity isotopy
$I=(F_t)_{t\in[0,1]}$ on $M$. Let $\mathrm{Rec}^+(F)$ be the set of
positively recurrent points of $F$. If $z\in \mathrm{Rec}^+(F)$, we
fix an open disk $U\subset M$ containing $z$, and write
$\{F^{n_k}(z)\}_{k\geq 1}$ for the subsequence of the positive orbit
of $z$ obtained by keeping the points that are in $U$. For any
$k\geq 0$, choose a simple path $\gamma_{F^{n_k}(z),z}$ in $U$
joining $F^{n_k}(z)$ to $z$. The homology class $[\Gamma_k]_M\in
H_1(M,\mathbb{Z})$ of the loop $\Gamma_k=
I^{n_k}(z)\gamma_{F^{n_k}(z),z}$ does not depend on the choice of
$\gamma_{F^{n_k}(z),z}$. Say that $z$ has a \emph{rotation vector}
$\rho_{M,I}(z)\in H_1(M,\mathbb{R})$ if
\[\lim_{l\rightarrow
+\infty}\frac{1}{n_{k_l}}[\Gamma_{k_l}]_M=\rho_{M,I}(z)\] for any
subsequence $\{F^{n_{k_l}}(z)\}_{l\geq 1}$ which converges to $z$.
Neither the existence nor the value of the rotation vector depends
on the choice of $U$. Let $\mathcal {M}(F)$ be the set of Borel
finite measures on $M$ whose elements are invariant by $F$. If
$\mu\in\mathcal {M}(F)$ and $M$ is compact, we can define the
rotation vector $\rho_{M,I}(z)$ for $\mu$-almost every positively
recurrent point \cite{P1} (see also Section 1.3 in \cite{W2}). If we
suppose that the rotation vector $\rho_{M,I}(z)$ is
$\mu$-integrable, we define the \emph{rotation vector of the
measure}
$$\rho_{M,I}(\mu)=\int_M\rho_{M,I}\, \mathrm{d}\mu\in H_1(M,\mathbb{R}).$$

Remark that the definition of rotation vector here is the same as
the homological rotation vector that was defined by Franks in
\cite{F2} on the positively recurrent points set. \bigskip

The following theorem is due to Franks (\cite{F2,F3}):

\begin{thm}\label{thm:Franks 3.4}Let $M$ be an oriented surface of
genus $0$ with $\chi(M)\leq0$, that is,
$M=\mathbf{S}^2\setminus\{x_1,x_2,\cdots,x_n\}$ where $n\geq2$.
Suppose that $F$ is the time-one map of an identity isotopy
$I=(F_t)_{t\in[0,1]}$ on $M$ and preserves a finite measure $\mu$ of
$M$ with total support. If $\rho_{M,I}(\mu)=0$, then $F$ has a fixed
point in the interior of $M$.
\end{thm}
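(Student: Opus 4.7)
The plan is to argue by contradiction: suppose $F$ has no fixed point in the interior of $M=\mathbf{S}^2\setminus\{x_1,\ldots,x_n\}$, and derive a contradiction with $\rho_{M,I}(\mu)=0$. Since $F$ preserves a finite measure $\mu$ of total support, the Poincar\'e recurrence theorem ensures that $\mu$-almost every point of $M$ is positively recurrent, so $\rho_{M,I}(z)$ is defined $\mu$-a.e., and the integral defining $\rho_{M,I}(\mu)$ makes sense. I would extend $I$ to an identity isotopy $\hat I$ on $\mathbf{S}^2$ fixing the punctures; under the contradictory hypothesis, the fixed-point set of the time-one map $\hat F$ equals $\{x_1,\ldots,x_n\}$.

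Next, I would apply Le Calvez's equivariant foliation theorem to $\hat I$ to produce a singular oriented foliation $\cF$ on $\mathbf{S}^2$ with singular set exactly $\{x_1,\ldots,x_n\}$ and positively transverse to $\hat I$. Restricted to $M$, each leaf of $\cF$ is a Brouwer line for $I$: positive orbits cross leaves in a consistent positive direction. This foliation allows a geometric computation of the rotation vector: for $\mu$-a.e.\ $z$, the homology class of the loop $\Gamma_k$ defining $\rho_{M,I}(z)$ is determined, up to bounded error, by the intersection numbers of a positively transverse arc from $z$ to $F^{n_k}(z)$ with a fixed family of loops in $M$ Poincar\'e-dual to $\cF$.

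The decisive step is to show that for $\mu$-a.e.\ positively recurrent $z$, $\rho_{M,I}(z)$ lies in a pointed convex cone $\cC\subset H_1(M,\mathbb{R})$ associated to $\cF$, with $\rho_{M,I}(z)=0$ only when the positive orbit of $z$ is trapped in a single leaf of $\cF$. A Poincar\'e--Bendixson analysis on the genus-zero surface $M$, together with the absence of interior fixed points, then forces such an orbit to accumulate on a puncture, contradicting positive recurrence. Hence $\rho_{M,I}(z)\neq 0$ on a set of positive $\mu$-measure, and integrating against $\mu$ yields $\rho_{M,I}(\mu)\in\cC\setminus\{0\}$, contradicting the hypothesis $\rho_{M,I}(\mu)=0$.

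The main obstacle is the cone-containment argument: passing from local Brouwer-type dynamics to global control on the homology classes traced out by recurrent orbits, where the Brouwer--Le Calvez equivariant foliation theorem is the essential tool. The case $n=2$, in which $M$ is the open annulus and the pointed cone in $H_1(M,\mathbb{R})\cong\mathbb{R}$ reduces to a half-line, recovers the annulus version of Franks' fixed-point theorem; the general case follows by the same strategy once the foliation is in place.
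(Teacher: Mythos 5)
The paper does not prove this statement at all: it is quoted from Franks \cite{F2,F3} and used as a black box, so your proposal can only be measured against the known arguments (Franks' original one via his generalized Poincar\'e--Birkhoff theorem \cite{F1}, and the transverse-foliation argument that the paper itself runs for its Theorem \ref{thm:k}). Your general strategy -- argue by contradiction, invoke Le Calvez's foliation theorem, and convert positive transversality into a homological positivity statement that is incompatible with $\rho_{M,I}(\mu)=0$ -- is indeed the modern route. But the decisive step is a genuine gap. First, a technical point: you cannot apply Theorem \ref{thm:foliation} to the extension $\hat I$ on $\mathbf{S}^2$ and prescribe the singular set to be exactly $\{x_1,\dots,x_n\}$, since on $\mathbf{S}^2$ every fixed point is contractible and the hypothesis of the theorem fails; you must either apply it on $M$ itself (legitimate under your contradiction hypothesis, giving a nonsingular foliation on $M$) or pass to a maximal unlinked set as in the paper's proof of Theorem \ref{thm:k}. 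More seriously, the cone claim carries the whole proof and is unsubstantiated: you give no argument that the rotation vectors of $\mu$-a.e.\ recurrent point lie in a \emph{pointed} convex cone, and the asserted dichotomy ``$\rho_{M,I}(z)=0$ only when the positive orbit is trapped in a single leaf'' cannot be correct as stated, because trajectories are positively transverse to $\mathcal{F}$, so no orbit is ever contained in a leaf, and nothing in your outline excludes recurrent points whose loops $\Gamma_k$ have nonzero classes of size $o(n_k)$, i.e.\ $\rho_{M,I}(z)=0$ without any trapping. The Poincar\'e--Bendixson step is also not a contradiction: an orbit accumulating on a puncture is perfectly compatible with positive recurrence, which only constrains returns near $z$.

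What a correct version of your approach needs (and what the paper's own proof of Theorem \ref{thm:k} actually does in its $k'=0$ case) is much more localized: use the preservation of a measure with total support (intersection property, absence of wandering sets) to show that every leaf of $\mathcal{F}$ is a line joining two punctures; fix one leaf $\lambda$, observe that every crossing of $\lambda$ by a transverse trajectory is positive, so the pairing of $\rho_{M,I}(z)$ with the cohomology class dual to $\lambda$ is $\geq 0$ wherever defined; then choose a small open set $U$ disjoint from $\lambda$ such that the first-return trajectory of every point of $U$ crosses $\lambda$ at least once, use $\mu(U)>0$ and the integrability of the first return time to get a strictly positive pairing on a set of positive measure, and integrate to obtain a strictly positive pairing with $\rho_{M,I}(\mu)$, contradicting $\rho_{M,I}(\mu)=0$. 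This replaces your pointed cone by a single dual class and requires no Poincar\'e--Bendixson analysis and no classification of zero-rotation orbits. (Franks' own proof is different again: it predates the foliation theorem and combines the vanishing of the mean rotation vector with the Poincar\'e--Birkhoff-type Theorem \ref{thm:FP}.) As written, your proposal does not close.
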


Denote by $\mathrm{Fix}_{\mathrm{Cont},I}(F)$ the set of
contractible fixed points of $F$, that is,
$x\in\mathrm{Fix}_{\mathrm{Cont},I}(F)$ if and only if $x$ is a
fixed point of $F$ and the oriented loop $I(x): t\mapsto F_t(x)$
defined on $[0,1]$ is contractible on $M$.  In \cite[Theorem
8.1]{P1}, Le Calvez proved the following deep result
\begin{thm}\label{thm:foliation}
Suppose that $M$ is a surface (without boundary) and
$I=(F_t)_{t\in[0,1]}$ is an isotopy on $M$ from $\mathrm{Id}_{M}$ to
$F$. We suppose that $F$ has no contractible fixed points. Then
there exists an oriented topological foliation $\mathcal{F}$ on $M$
such that, for all $z\in M$, the trajectory $I(z)$ is homotopic to
an arc $\gamma$ joining $z$ and $F(z)$ in $M$ which is positively
transverse to $\mathcal{F}$. That means that for every $t_0\in[0,1]$
there exists an open neighborhood $V\subset M$ of $\gamma(t_0)$ and
an orientation preserving homeomorphism $h:V\rightarrow(-1,1)^2$
which sends the foliation $\mathcal{F}$ on the horizontal foliation
(oriented with $x_1$ increasing) such that the map $t\mapsto
p_2(h(\gamma(t)))$ defined in a neighborhood of $t_0$ is strictly
increasing where $p_2(x_1,x_2)=x_2$.
\end{thm}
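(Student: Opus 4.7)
The plan is to reduce the theorem to a statement about fixed-point-free orientation-preserving homeomorphisms of the plane via the universal cover, and then invoke an equivariant version of Brouwer's plane translation theorem. First I would lift the isotopy $I$ to the universal cover $\pi : \tilde{M} \to M$, obtaining an identity isotopy $\tilde{I} = (\tilde{F}_t)_{t\in[0,1]}$ joining $\mathrm{Id}_{\tilde{M}}$ to a canonical lift $\tilde{F}$ of $F$. The hypothesis that $F$ has no contractible fixed points translates precisely into the statement that $\tilde{F}$ has no fixed point in $\tilde{M}$. Excluding the sphere case (where a fixed-point-free lift of a map isotopic to the identity is impossible by Lefschetz), one may assume $\tilde{M}$ is homeomorphic either to $\mathbb{R}^2$ or to an open subsurface thereof, so $\tilde{F}$ behaves like a Brouwer homeomorphism.

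The key local tool is Brouwer's plane translation theorem together with its refinements: through every point $\tilde{z}\in\tilde{M}$ there passes an oriented properly embedded topological line $\Delta$, a so-called Brouwer line, which separates $\tilde{F}^{-1}(\Delta)$ from $\tilde{F}(\Delta)$. A single such line gives transversality for a single trajectory, but the theorem demands much more: the Brouwer lines must be chosen pairwise disjoint, cover all of $\tilde{M}$, and be invariant under the deck transformation group $\Gamma = \pi_1(M)$ so that the family descends to a foliation $\mathcal{F}$ on $M$. Once such an equivariant foliation $\tilde{\mathcal{F}}$ is produced, positive transversality of $I(z)$ in $M$ is derived by lifting: the trajectory $\tilde{I}(\tilde{z})$ joins $\tilde{z}$ to $\tilde{F}(\tilde{z})$ and can be homotoped (rel endpoints) to an arc crossing each leaf encountered in the positive direction, thanks to the Brouwer-line property.

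The second step is to carry out the coherent choice of lines. The natural attempt is a Zorn-type argument on $\Gamma$-invariant families of pairwise disjoint Brouwer lines: one fixes a maximal such family $\mathcal{L}$ and tries to show that $\bigcup\mathcal{L}$ is all of $\tilde{M}$. The delicate part is that through a candidate point $\tilde{z}\notin\bigcup\mathcal{L}$ one must be able to insert a new Brouwer line compatible with the existing ones, then close its $\Gamma$-orbit back into a disjoint family; this requires a careful analysis of how Brouwer lines can accumulate and of the structure of the complementary components of $\bigcup\mathcal{L}$. An alternative organization, probably closer to the actual argument, is to work with families of pairwise disjoint free topological discs, pass to a maximal such family, and then thicken to a foliation using an equivariant Whitehead-type refinement.

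The main obstacle is precisely this equivariant gluing in step two. Producing one Brouwer line per point is classical, but aligning them into a single oriented topological foliation that is invariant under the deck group is a purely global and non-local problem, and it is exactly the content of Le Calvez's theorem. I would expect any honest execution of the plan to occupy a substantial technical development devoted to free topological discs, maximal isotopies and their singularities, and an equivariant patching procedure; the success of the sketch rests entirely on whether these constructions can be made compatible with the $\Gamma$-action on $\tilde{M}$ while preserving the Brouwer-line property at every stage.
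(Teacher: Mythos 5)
This theorem is not proved in the paper at all: it is Le Calvez's equivariant foliated version of the Brouwer plane translation theorem, quoted verbatim from \cite{P1} (Th\'eor\`eme 8.1 there) and used as a black box. Your proposal carries out only the easy preliminary reduction and then, as you yourself admit, defers exactly the content of the theorem. The reduction is fine: lifting the isotopy to the universal cover, the absence of contractible fixed points of $F$ is equivalent to the absence of fixed points of the canonical lift $\tilde{F}$; the sphere case is vacuous by Lefschetz; and the goal becomes a $\pi_1(M)$-equivariant oriented foliation of $\tilde{M}\cong\mathbb{R}^2$ all of whose leaves are Brouwer lines for $\tilde{F}$. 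But identifying that goal is not progress toward it, and the rest of your text is a description of the difficulty rather than an argument.

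Concretely, the Zorn-type scheme you propose would fail as stated: a maximal $\Gamma$-invariant family of pairwise disjoint Brouwer lines need not cover $\tilde{M}$, and there is no local procedure for inserting a new Brouwer line through a prescribed point so that it stays disjoint from an arbitrary preexisting (infinite, equivariant) family --- disjointness and equivariance are global constraints, and controlling how Brouwer lines accumulate is precisely where the problem lives, so maximality alone gives you nothing. Le Calvez's actual proof takes a different route: he works with maximal \emph{free brick decompositions} (building on Sauzet's thesis), shows that maximality forbids closed chains of bricks and hence induces an order structure on the bricks, extracts Brouwer lines from the skeleton of the decomposition, organizes them into a foliation, and obtains equivariance by running the entire construction with $\Gamma$-invariant decompositions; even after the foliation is built, the statement that the trajectory $I(z)$ is homotopic to a \emph{positively transverse} arc requires a further argument and is not an automatic consequence of each leaf being a Brouwer line. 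So there is a genuine gap: what you have written is a roadmap of what must be proved, not a proof, and within this paper the correct treatment of the statement is the one the authors adopt, namely citing \cite{P1}.
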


We say that $X\subseteq \mathrm{Fix}_{\mathrm{Cont},I}(F)$ is
\emph{unlinked}
 if there exists an isotopy $I'=(F'_t)_{t\in [0,1]}$ homotopic
to $I$ which fixes every point of $X$, that is, $F'_t(x)=x$ for all
$t\in[0,1]$ and $x\in X$. Moreover, we say that $X$ is a
\emph{maximal unlinked set}, if any set $X'\subseteq
\mathrm{Fix}_{\mathrm{Cont},I}(F)$ which strictly contains $X$ is
not unlinked. If $\sharp\mathrm{Fix}_{\mathrm{Cont},I}(F)<\infty$,
there must be a set $X\subseteq \mathrm{Fix}_{\mathrm{Cont},I}(F)$
which is a maximal unlinked set. By Theorem \ref{thm:foliation},
there exists an oriented topological foliation $\mathcal{F}$ on
$M\setminus X$ (or, equivalently, a singular oriented foliation
$\mathcal{F}$ on $M$ with $X$ equal to the singular set) such that,
for all $z\in M\setminus X$, the trajectory $I(z)$ is homotopic to
an arc $\gamma$ joining $z$ and $F(z)$ in $M\setminus X$ which is
positively transverse to $\mathcal{F}$.

\subsection{Rotation number}We denote by $\pi$ the covering map of the open annulus
(resp. the closed annulus)
\begin{eqnarray*}
\pi\,:\, \mathbb{R}\times(0,1)\quad
(\mathrm{resp.}\quad\mathbb{R}\times[0,1])&\rightarrow&
\mathbb{A}\quad
(\mathrm{resp.}\quad\bar{\mathbb{A}})\\
(x,y)&\mapsto&(x+\mathbb{Z},y),
\end{eqnarray*}
and by $T$ the generator of the covering transformation group
\begin{eqnarray*}
T\,:\, \mathbb{R}\times(0,1)\quad (\mathrm{resp.}\quad\mathbb{R}\times[0,1])&\rightarrow& \mathbb{R}\times(0,1)\quad (\mathrm{resp.}\quad\mathbb{R}\times[0,1]) \\
(x,y)&\mapsto&(x+1,y).
\end{eqnarray*}

Write respectively $S$ and $N$ for the lower and the upper end of
$\mathbb{A}$.\smallskip

In the following, we denote by $\mathbb{A}$ the open or closed
annulus unless an explicit mention. We call \emph{essential circle}
in $\mathbb{A}$ every simple closed curve which is not
null-homotopic. Let $F$ be a homeomorphism of $\mathbb{A}$. We say
that $F$ satisfies the \emph{intersection property} if  any
essential circle in $\mathbb{A}$ meets its image by $F$. We denote
the space of all homeomorphisms of $\mathbb{A}$ which are isotopic
to the identity as $\mathrm{Homeo}_*(\mathbb{A})$ and its subspace
whose elements additionally have the intersection property as
$\mathrm{Homeo}_*^\wedge(\mathbb{A})$. It is easy to see that a
homeomorphism of $\mathbb{A}$ that preserves a finite measure with
total support satisfies the intersection property.

When $F\in\mathrm{Homeo}_*(\mathbb{A})$, we define the rotation
number of a positively recurrent point as follows. We say that a
positively recurrent point $z$ has a \emph{rotation number}
$\rho(f;z)\in \mathbb{R}$ for a lift $f$ of $F$ to the universal
cover of $\mathbb{A}$, if for every subsequence
$\{F^{n_k}(z)\}_{k\geq 0}$ of $\{F^n(z)\}_{n\geq 0}$ which converges
to $z$, we have
\[\lim_{k\rightarrow+\infty}\frac{p_1\circ
f^{n_k}(\widetilde{z})-p_1(\widetilde{z})}{n_k}=\rho(f;z)\] where
$\widetilde{z}\in \pi^{-1}(z)$ and $p_1$ is the first projection
$p_1(x,y)=x$. In particular, the rotation number $\rho(f;z)$ always
exists and is rational when $z$ is a fixed or periodic point of $F$.
Let $\mathrm{Rec}^+(F)$ be the set of positively recurrent points of
$F$. We denote the set of rotation numbers of positively recurrent
points of $F$ as $\mathrm{Rot}(f)$.

It is well known that a positively recurrent point of $F$ is also a
positively recurrent point of $F^q$ for all $q\in \mathbb{N}$ (see
the appendix of \cite{W}). By the definition of rotation number, we
easily get that the following elementary properties.
\begin{enumerate}\label{prop:ROT}
  \item[1.] $\rho(T^k\circ f;z)=\rho(f;z)+k$, and hence $\mathrm{Rot}(T^k\circ f)=\mathrm{Rot}(f)+k$
  for every $k\in \mathbb{Z}$;
  \item[2.] $\rho(f^q;z)=q\rho(f;z)$, and hence $\mathrm{Rot}(f^q)=q\mathrm{Rot}(f)$ for every $q\in \mathbb{N}$.
\end{enumerate}\smallskip

Suppose that $z\in \mathrm{Rec}^+(F)$ and $\widetilde{z}\in
\pi^{-1}(z)$. We define $\mathcal
{E}(z)\subset\mathbb{R}\cup\{-\infty,+\infty\}$ by saying that
$\rho\in\mathcal {E}(z)$ if there exists a sequence
$\{n_k\}_{k=1}^{+\infty}\subset\mathbb{N}$ such that
\begin{eqnarray*}
   &\bullet& \lim_{k\rightarrow+\infty}F^{n_k}(z)=z;\qquad\qquad\qquad\qquad\qquad\qquad\qquad\qquad
   \qquad\qquad\qquad\qquad\qquad\qquad\quad \\
   &\bullet& \lim_{k\rightarrow+\infty}\frac{p_1(f^{n_k}(\widetilde{z}))-p_1(\widetilde{z})}{n_k}=\rho.
\end{eqnarray*}

Define $\rho^-(f;z)=\inf\mathcal
{E}(z)\quad\mathrm{and}\quad\rho^+(f;z)=\sup\mathcal {E}(z).$
Obviously, we have that $\rho(f;z)$ exists if and only if
$\rho^-(f;z)=\rho^+(f;z)\in\mathbb{R}$. Note that the set $\mathcal
{E}(z)$ is a bounded set when $\mathbb{A}$ is a closed annulus (by
compactness) and might be an unbounded set when $\mathbb{A}$ is an
open annulus. However, the set $\mathcal {E}(z)$ is still a bounded
set if $\sharp\mathrm{Fix(F)}<+\infty$ (see \cite{P2}). By the
definitions, it is easy to see that $\rho^-(f;z)$ and $\rho^+(f;z)$
satisfy the same properties as $\rho(f;z)$.

\begin{rem}\label{rem:rot number} A lift $f$ of $F$ is one-to-one
corresponding to an identity isotopy $I$ (mod homptopy). Observing
that $H_1(\mathbb{A},\mathbb{R})\simeq\mathbb{R}$, the rotation
number $\rho(f;z)$ is nothing else but the rotation vector
$\rho_{\mathbb{A},I}(z)$ where the time-one map of the lift identity
isotopy of $I$ to the universal cover is $f$.
\end{rem}

The following Theorem is due to Franks \cite{F1} when $\mathbb{A}$
is closed annulus and $F$ has no wandering point,  and it was
improved by Le Calevez \cite{P1} (see also \cite{W}) when
$\mathbb{A}$ is open annulus and $F$ satisfies the intersection
property:

\begin{thm}\label{thm:FP}Let $F\in \mathrm{Homeo}_*^\wedge(\mathbb{A})$
 and $f$ be a lift of $F$ to the universal cover of $\mathbb{A}$. Suppose that there
exist two recurrent points $z_{1}$ and $z_{2}$ such that
$-\infty\leq\rho^-(f;z_{1})<\rho^+(f;z_{2})\leq+\infty$. Then for
any rational number $p/q\in ]\rho^-(f;z_{1}),\rho^+(f;z_{2})[$
written in an irreducible way, there exists a periodic point of
period $q$ whose rotation number is $p/q$.
\end{thm}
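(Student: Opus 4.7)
The plan is to reduce to a contractible-fixed-point problem for $F^q$ and then feed it into Theorem \ref{thm:Franks 3.4}. For an irreducible $p/q \in (\rho^-(f;z_1),\rho^+(f;z_2))$, set $g := T^{-p}\circ f^q$, a lift of $F^q$ associated to some identity isotopy $I_q$. Any fixed point $\tilde{x}$ of $g$ in the universal cover satisfies $f^q(\tilde{x}) = T^p(\tilde{x})$, so its projection $x \in \mathbb{A}$ is a fixed point of $F^q$ with $\rho(f;x) = p/q$; since $\gcd(p,q)=1$, the least $F$-period of $x$ must be exactly $q$. Moreover $x$ is a contractible fixed point of $F^q$ for $I_q$, because the path $t \mapsto g_t(\tilde{x})$ lifting the trajectory loop closes up in the universal cover. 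It therefore suffices to exhibit a contractible fixed point of $F^q$ for $I_q$.

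Using the elementary properties of Section 2.2, the hypothesis rewrites as $\rho^-(g;z_1) < 0 < \rho^+(g;z_2)$. I would then apply the Krylov--Bogolyubov procedure to Birkhoff averages along subsequences of $F^q$-iterates of $z_1$ and $z_2$ that realise these extreme rotation values, producing two $F^q$-invariant Borel probability measures $\nu_-$ and $\nu_+$ on $\mathbb{A}$ whose rotation vectors with respect to $I_q$ (equivalently, via Remark \ref{rem:rot number}, the rotation numbers) are strictly negative and strictly positive respectively. Because the rotation vector depends affinely on the measure, the intermediate value theorem furnishes a convex combination $\mu := t\nu_- + (1-t)\nu_+$ of rotation vector exactly zero.

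Viewing $\mathbb{A}$ as the surface $\mathbf{S}^2 \setminus \{N,S\}$ of genus zero with $\chi \leq 0$, I would then feed $\mu$ into Theorem \ref{thm:Franks 3.4} to obtain the desired interior contractible fixed point of $F^q$. The principal obstacle is the total-support hypothesis, since $\nu_\pm$ are concentrated on orbit closures of individual recurrent points and need not see all of $\mathbb{A}$. I would address this by a dichotomy: if $\mathrm{supp}(\mu) = \mathbb{A}$, Theorem \ref{thm:Franks 3.4} applies directly; otherwise, the intersection property of $F$ forbids any $F$-invariant essential circle in $\mathbb{A}\setminus\mathrm{supp}(\mu)$, so one restricts $F^q$ to an $F$-invariant open subsurface of $\mathbb{A}$ of genus zero and non-positive Euler characteristic on which $\mu$ has total support, and applies Theorem \ref{thm:Franks 3.4} there. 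As a parallel strategy I would argue by contradiction via Theorem \ref{thm:foliation}: if $F^q$ had no contractible fixed point for $I_q$, the resulting oriented singular foliation on $\mathbb{A}$ would force the signed transverse crossings accumulated by any positively recurrent orbit of $g$ to share a single sign, contradicting $\rho^-(g;z_1) < 0 < \rho^+(g;z_2)$ by Remark \ref{rem:rot number}. Making either step rigorous---in particular, controlling the topology of the invariant subsurface, or converting the straddling sign hypothesis into a genuine obstruction for the foliation---is where the technical effort would concentrate, especially in the open annulus case where rotation numbers may be infinite and invariant measures need not be compactly supported.
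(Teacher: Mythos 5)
You should first note that the paper does not prove Theorem \ref{thm:FP} at all: it is quoted as a known result, due to Franks \cite{F1} in the closed, non-wandering case and to Le Calvez \cite{P1} (see also \cite{W}) in the open case under the intersection property. So your attempt has to be judged on its own, and its main route has a genuine gap. Theorem \ref{thm:Franks 3.4} requires an invariant measure with \emph{total support}, but in Theorem \ref{thm:FP} the map $F$ is only assumed to lie in $\mathrm{Homeo}_*^\wedge(\mathbb{A})$, i.e.\ to have the intersection property; no invariant measure of total support need exist. The Krylov--Bogolyubov measures $\nu_\pm$ you build are carried by the closures of single orbits, so $\mathrm{supp}(\mu)$ is a closed invariant set which may well have empty interior (for instance when $\nu_\pm$ sit on periodic orbits). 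Consequently there is in general \emph{no} open invariant subsurface on which $\mu$ has total support, and the proposed dichotomy cannot be carried out; the intersection property does not manufacture one. Two further problems: in the open annulus $\rho^\pm$ may be infinite and mass may escape to the ends, so the weak limits defining $\nu_\pm$ need not exist or need not have the intended (finite, nonzero) rotation vectors; and Theorem \ref{thm:Franks 3.4} as stated only produces a fixed point of $F^q$, not a fixed point of the specific lift $g=T^{-p}\circ f^q$, whereas your reduction needs a contractible fixed point for $I_q$, i.e.\ a point of rotation number exactly $p/q$.

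Your ``parallel strategy'' is in fact the correct route --- it is essentially Le Calvez's argument in \cite{P1} (and Wang's in \cite{W}), which is why those references are cited for this theorem --- but as written it skips the step where all the work lies. If $g$ has no fixed point, Theorem \ref{thm:foliation} gives a nonsingular oriented foliation of $\mathbb{A}$ transverse to the trajectories, and one must then analyse its structure: if it has an essential closed leaf $\lambda$, positive transversality pushes $\lambda$ strictly to one side of itself, so $F^q(\lambda)\cap\lambda=\emptyset$, and \emph{this} is where the intersection property is used to reach a contradiction; only after excluding essential closed leaves can one show that every positively recurrent point has rotation number of a single sign for $g$, contradicting $\rho^-(g;z_1)<0<\rho^+(g;z_2)$. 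Your claim that the foliation ``forces the signed transverse crossings to share a single sign'' is not automatic and is false without the intersection property (a rigid rotation of the closed annulus by an irrational angle has a transverse foliation by essential circles and recurrent points of nonzero rotation number, with no fixed point), so the case analysis on the leaves, and the explicit use of the intersection property, cannot be waved away.
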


\bigskip

\section{Proof of the theorems}
\begin{proof}[Proof of the Theorem \ref{thm:k}]
If $\sharp\mathrm{Per}_k(F)=+\infty$, we have nothing to do. Hence
we assume that $\sharp\mathrm{Per}_k(F)<+\infty$. Let
$\mathrm{Per}_k(F)=\{x_1,\cdots,x_m\}$ and $G=F^k$. If
$\sharp\mathrm{Fix}(G)=+\infty$, then there exists $t\,|\,k$ with
$(t,n_0)=1$ such that $\sharp\mathrm{Per}_t(F)=+\infty$. We have
done. Therefore, we can assume that
$1\leq\sharp\mathrm{Fix}(G)<+\infty$. Assume that
$\mathrm{Fix}(G)=\{y_1,\cdots,y_{m'}\}$. By Theorem \ref{thm:FP}, we
can choose a lift $f$ of $F$ to the universal cover of $\mathbb{A}$
or $\bar{\mathbb{A}}$ such that $\rho(f;x_i)=\frac{k'}{k}\in[0,1)$
for $i=1,\cdots,m$ where $k'\in\mathbb{Z}$. The proof will be
divided into two cases: $k'=0$ and $k'\neq0$. \smallskip

In the case of $k'\neq0$, by Theorem \ref{thm:Franks 3.5}, we know
that $\sharp\mathrm{Per}(G)=+\infty$. Hence
$\mathrm{Per}(G)\setminus\mathrm{Fix}(G)\neq\emptyset$. Note that
$\rho(f^k;y_i)=k'$ for all $i$. For any
$z\in\mathrm{Per}(G)\setminus\mathrm{Fix}(G)$, we assume that
$\rho(f^k;z)=\frac{p}{q}$ with $q\geq1$ and $(p,q)=1$ if $p\neq0$.
W.l.o.g., we assume that $k'<\frac{p}{q}$. Then by Theorem
\ref{thm:FP}, for every $\frac{r}{s}\in(k',\frac{p}{q})$ with
$(r,s)=1$ and $(s,n_0)=1$, there exists $z'\in\mathrm{Per}(G)$ such
that $\rho(f^k;z')=\frac{r}{s}$. Observing that the point $z'$ is a
$ks$-periodic point of $F$, the conclusion follows in this
case.\bigskip

We now consider the case of $k'=0$.  If the annulus is a closed
annulus $\bar{\mathbb{A}}$, we consider its interior $\mathbb{A}$.
Let $\mathbf{S}^2=\mathbb{A}\sqcup\{N,S\}$. Note that
$\chi(\mathbf{S}^2\setminus\{N,S,y_1,\cdots,y_{m'}\})\leq0$ (=0 when
the annulus is closed and $\mathrm{Fix}(G)\subset \partial
\bar{\mathbb{A}}$). Write
$M=\mathbf{S}^2\setminus\{N,S,y_1,\cdots,y_{m'}\}$.\smallskip

When $\chi(M)=0$, we work on the closed annulus $\bar{\mathbb{A}}$.
We have that $\rho(f^k,y_i)=0$ and $\rho(f^k,z)=0$ for any
$z\in\mathrm{Rec^+(G)}$ (by Theorem \ref{thm:FP}). By Theorem
\ref{thm:Franks 3.4}, there is a fixed point of $G$ in the interior
of $\bar{\mathbb{A}}$ which contradicts the fact that
$\mathrm{Fix}(G)\subset
\partial \bar{\mathbb{A}}$.\smallskip

In the case of $\chi(M)<0$, we follow the idea of Le Calvez
\cite[Theorem 9.3]{P1}.
 We choose an identity isotopy
 $I_0=(F_t)_{t\in[0,1]}$ on $\mathbf{S}^2$ such that $F_t$ fixes $N$ and $S$ for every $t$, and the
time-one map of the lift identity isotopy of $I_0|_{\mathbb{A}}$ to
$\mathbb{R}\times(0,1)$ is $f^k$. Furthermore, as $\rho(f^k,y_i)=0$
for all $i$, we can suppose that $I_0$ fixes $N,S$ and one point of
$\{y_1,\cdots,y_{m'}\}$ (e.g., we can modify $I_0$ though the
technique in the proof of Lemma 1.2 in \cite[Section 1.4]{W2}
without changing the homotopic class of $I_0|_{\mathbb{A}}$).
Identify $G$ as a map of $\mathbf{S}^2$. As
$\sharp\mathrm{Fix}(G)<+\infty$, there is a maximal unlink set
$X\subset \mathrm{Fix}(G)=\{N,S,y_1,\cdots,y_{m'}\}$ and an identity
isotopy $I_1$ which is homotopic to $I_0$ with fixed endpoints such
that $I_1$ fixes every point of $X$. Note that $\sharp X\geq3$ in
this case. By Theorem \ref{thm:foliation}, there exists an oriented
topological foliation $\mathcal{F}$ on $\mathbf{S}^2\setminus X$
such that, for all $z\in \mathbf{S}^2\setminus X$, the trajectory
$I_1(z)$ is homotopic to an arc $\gamma$ joining $z$ and $G(z)$ in
$\mathbf{S}^2\setminus X$ which is positively transverse to
$\mathcal{F}$. For any leaf $\lambda\in \mathcal{F}$, the
$\alpha$-limit set and $\omega$-limit set of $\lambda$ must belong
to two distinct points of $X$ respectively since $X$ is finite and
$G$ is symplectic. Choose a leaf $\lambda\in \mathcal{F}$ which
connects two different points $z_1$ and $z_2$ of $X$. We consider
the following open annulus
$A_{z_1,z_2}=\mathbf{S}^2\setminus\{z_1,z_2\}$.

We choose a small open disk $U$ near $\lambda$ such that, $U\cap
\lambda=\emptyset$ and for any $z\in U$, $\lambda\wedge I_1(z)\geq
1$ where $I_1(z)$ is the trajectory of $z$ under the isotopy $I_1$.
 We define the first
return map

\begin{eqnarray*}
  \Phi: \mathrm{Rec}^+(G)\cap U&\rightarrow& \mathrm{Rec}^+(G)\cap U, \\
  z &\mapsto& G^{\tau(z)}(z),
\end{eqnarray*}
where $\tau(z)$ is the first return time, that is, the least number
$n\geq1$ such that $G^n(z)\in U$. By Poincar\'{e} Recurrence
Theorem, this map is defined $\mu$-a.e. on $U$. For every couple
$(z',z'')\in U^2$, choose a simple path $\gamma_{z',z''}$ in $U$
joining $z'$ to $z''$. For every $z\in \mathrm{Rec}^+(G)\cap U$ and
$n\geq 1$, define
$$\tau_n(z)=\sum_{i=0}^{n-1}\tau(\Phi^i(z)),\quad \Gamma_z^n=I_1^{\tau_n(z)}(z)\gamma_{\Phi^n(z),z},
\quad m(z)=\Gamma_z^1\wedge\lambda,\quad
m_n(z)=\sum_{i=0}^{n-1}m(\Phi^i(z)).$$

It is well known that $\tau\in L^1(U,\mathbb{R})$ (see, e.g.,
\cite[Section 1.3]{W2}). Hence $\tau_n/n$ converges $\mu$-a.e. on
$\mathrm{Rec}^+(G)\cap U$. It is clear that $m_n/n\geq1$ for all
$n\geq1$ and $z\in \mathrm{Rec}^+(G)\cap U$. This implies that
$m_n/\tau_n>0$ for $\mu$-a.e. on $\mathrm{Rec}^+(G)\cap U$. Observe
that
$$\mathcal {E}(z)\subset[\inf_n\left\{\frac{m_n(z)}{\tau_n(z)}\right\},
\sup_n\left\{\frac{m_n(z)}{\tau_n(z)}\right\}]\subset \mathbb{R}$$
when the limit of $\tau_n(z)/n$ exists, where the definition
$\mathcal {E}(z)$ one can refer to Remark \ref{rem:rot number}. We
get that $\rho^-_{A_{z_1,z_2},I_1}(z)>0$ for $\mu$-a.e. on
$\mathrm{Rec}^+(G)\cap U$. We also have that
$\rho_{A_{z_1,z_2},I_1}(y)=0$ for all $y\in X\setminus\{z_1,z_2\}$.
Then Theorem \ref{thm:k} follows by Theorem \ref{thm:FP}.
\end{proof}\bigskip

\end{document}